\newtheorem{theorem}{Theorem}
\newtheorem{lemma}[theorem]{Lemma}
\newtheorem{conjecture}[theorem]{Conjecture}
\newtheorem{defin}[theorem]{Definition}
\newtheorem{examp}[theorem]{Example}
\newtheorem{rema}[theorem]{Remark}
\begin{document}

\title{Unbounded discrepancy in Frobenius numbers}

\author{Jeffrey Shallit\\
School of Computer Science \\
University of Waterloo\\
Waterloo, ON  N2L 3G1 \\
Canada\\
{\tt shallit@cs.uwaterloo.ca }\\
\and
James Stankewicz\footnote{Partially supported by NSF VIGRE grant
DMS-0738586}\\
Department of Mathematics \\
University of Georgia \\
Athens, GA 30602 \\
USA \\
{\tt stankewicz@gmail.com }}

\maketitle

\begin{abstract}

Let $g_j$ denote the largest integer that is represented exactly $j$
times as a non-negative integer linear combination of $\lbrace x_1,
\ldots, x_n\rbrace$.  We show that for any $k > 0$, and
$n = 5$, the quantity $g_0 - g_k$ is unbounded. Furthermore, we
provide examples with $g_0 > g_k$ for $n \geq 6$ and $g_0 > g_1$
for $n \geq 4$.
\end{abstract}

\section{Introduction}

     Let $X = \lbrace x_1, x_2, \ldots, x_n \rbrace$ be a set of distinct positive
integers such that $\gcd(x_1, x_2, \ldots, x_n) = 1$.  The
{\sl Frobenius number} $g(x_1, x_2, \ldots, x_n)$ is defined to be the
largest integer that cannot be expressed as a non-negative integer
linear combination of the
elements of $X$.   For example, $g(6, 9, 20) = 43$.  

       The Frobenius number --- the name comes from the fact that Frobenius
mentioned it in his lectures, although he apparently never wrote about it ---
is the subject of a huge literature, which is admirably summarized in the
book of Ram\'{\i}rez Alfons\'{\i}n \cite{ramirez}.  

     Recently, Brown et al.\ \cite{brown} considered a generalization of
the Frobenius number, defined as follows:  $g_j(x_1, x_2, \ldots, x_n)$ is
largest integer 
having exactly $j$ representations as a non-negative
integer linear combination of $x_1, x_2, \ldots, x_n$. 
(If no such integer exists, Brown et al.\ defined $g_j$ to be $0$, but
for our purposes, it seems more reasonable to leave it undefined.)
Thus $g_0$ is just
$g$, the ordinary Frobenius number.     They observed that, for a fixed
$n$-tuple $(x_1, x_2, \ldots, x_n)$, the function $g_j(x_1, x_2, \ldots, x_n)$
need not be increasing (considered as a function of $j$).  For example,
they gave the example
$g_{35} (4,7,19) = 181$ while $g_{36}(4,7,19) = 180$.  They asked if there
are examples for which $g_1 < g_0$.  Although they did not say so,
it makes sense to impose the condition that 

\smallskip

no $x_i$ can be written as a non-negative integer linear
combination of the others, (*)

\smallskip

\noindent for otherwise we have trivial examples such as 
$g_0(4, 5, 8, 10) = 11$ and $g_1 (4,5,8,10) = 9$.     We call a tuple
satisfying (*) a {\sl reasonable} tuple.

     In this note we show that the answer to the question of
Brown et al.\ is ``yes'', even for reasonable tuples.
For example, it is easy to verify that
$g_0 (8,9,11,14,15) = 21$, while
$g_1 (8,9,11,14,15) = 20$.  But we prove much more:
we show that 
$$g_0 (2n-2, 2n-1, 2n, 3n-3, 3n) = n^2 - O(n),$$ while for
any fixed $k \geq 1$ we have $g_k (2n-2, 2n-1, 2n, 3n-3, 3n) = O(n)$.
It follows that for this parameterized
$5$-tuple and all $k \geq 1$,
we have $g_0 - g_k \rightarrow \infty$ as $n \rightarrow \infty$.

\section{The main result}

      We define $X_n = \lbrace 2n-2, 2n-1, 2n, 3n-3, 3n \rbrace$.
It is easy to see that this is a reasonable
$5$-tuple for $n \geq 5$.
If we can write $t$ as a non-negative linear combination
of the elements of $X_n$, we say $t$ has a representation or is
representable.

We define $R(j) $ to be the number of distinct representations of $j$
as a non-negative integer linear combination of the elements of $X_n$.

\begin{theorem}
\begin{enumerate}[$($a$)$]
\item $g_k (X_n) = (6k+3)n - 1$ for $n > 6k+3$, $k \geq 1$.
\item $g_0(X_n) = n^2 - 3n +1$ for $n \geq 6$;
\end{enumerate}
\label{ref1}
\end{theorem}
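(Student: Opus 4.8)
The plan is to reduce every representation count to lattice points satisfying two linked linear equations. Given a representation $(c_1,\dots,c_5)$ of $t$, set $m = 2(c_1+c_2+c_3)+3(c_4+c_5)$ and $d = 2c_1+c_2+3c_4$; since $2n-2,2n-1,2n\equiv -2,-1,0$ and $3n-3,3n\equiv -3,0\pmod n$, one checks that $t=nm-d$ with $0\le d\le m$ and $d\equiv -t\pmod n$. Thus the representations of $t$ split into \emph{layers} indexed by the deficiency $d=d_0+jn$, where $d_0\in\{0,\dots,n-1\}$ is fixed by the residue of $t$, and a layer can be nonempty only when $d\le m$. Writing $g(L)$ for the number of pairs $(a,b)\ge 0$ with $2a+3b=L$ (so $g(L)=\lfloor L/6\rfloor+1$ unless $L\equiv 1\pmod 6$, in which case $g(L)=\lfloor L/6\rfloor$), eliminating $c_1,c_4$ and then $c_3,c_5$ shows that the number of representations in the layer with deficiency $d$ and total $m$ equals $N(d,m)=\sum_{c_2\ge 0} g(d-c_2)\,g(m-d-c_2)$. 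This identity is the engine for both parts.

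For part (a) I would first evaluate $R$ at the claimed value. If $t=(6k+3)n-1$ then $d_0=1$ and $m_0=6k+3$; the layer $j$ has deficiency $1+jn$ and total $6k+3+j$, and since $n>6k+3$ every $j\ge 1$ violates $d\le m$, so only the minimal layer survives and $R(t)=N(1,6k+3)=g(0)g(6k+1)=g(6k+1)=k$. For the upper bound I would show that every representable $t>(6k+3)n-1$ has $R(t)\ge k+1$ (non-representable $t$ give $0\ne k$, which is harmless since $k\ge 1$). The key observation is that such a $t$ satisfies $t\ge(6k+3)n$, so if $d^*$ denotes its least occurring deficiency then $m^*=(t+d^*)/n\ge 6k+3$, with $m^*\ge 6k+4$ as soon as $d^*\ge 1$. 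When $d^*=0$ we get $N(0,m^*)=g(m^*)\ge g(6k+3)=k+1$ directly, and when $d^*\ge 1$ the crux inequality $N(d^*,m^*)\ge k+1$ finishes it. Since $R(t)\ge N(d^*,m^*)$, this pins $(6k+3)n-1$ as the largest integer with exactly $k$ representations.

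For part (b) I would use only the necessary condition $d\le m$. For $t\equiv r\pmod n$ the minimal deficiency is $d_0=(n-r)\bmod n$, and representability forces $d_0\le m_0=(t+d_0)/n$, i.e.\ $t\ge d_0(n-1)$. The residue $r=1$ is the worst case: there $d_0=n-1$, so representability requires $t\ge (n-1)^2=n^2-2n+1$, and the forced shape $d=m$ at $t=n^2-2n+1$ reduces to solving $2c_1+3c_4=n-1$, which is possible. Hence the largest non-representable integer in this class is $(n-1)^2-n=n^2-3n+1$, which I would verify is genuinely non-representable; every other residue yields threshold at most $(n-2)(n-1)=n^2-3n+2$, which is itself representable, so those classes contribute only smaller gaps. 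A routine check that $d\le m$ is in fact sufficient for $t$ this large (padding a minimal deficiency solution by $c_3,c_5$) then gives $g_0(X_n)=n^2-3n+1$.

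The main obstacle is the crux inequality $N(d,m)\ge k+1$ for all $1\le d\le m$ with $m\ge 6k+4$. The useful facts are that $g(L)\ge k+1$ for every $L\ge 6k+2$ while $g(6k+1)=k$, and that the only downward dips of $g$ occur at $L\equiv 1\pmod 6$ and have size one. Because $N(d,m)$ is a sum of products $g(d-c_2)\,g(m-d-c_2)$ whose arguments total $m-2c_2$, at least one factor is forced to be large; the tight configurations are those with $d$ small or $d$ near $m$, where the sum is short, and I expect to dispatch them by a finite case analysis on $d\bmod 6$ and on the number of surviving terms, checking that the isolated dips never pull the total below $k+1$. Ensuring these boundary estimates are uniform in $n$ — and in particular that the hypothesis $n>6k+3$ is exactly what suppresses all non-minimal layers at the critical value $(6k+3)n-1$ — is where most of the care will be required.
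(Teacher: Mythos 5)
Your layer decomposition is correct and takes a genuinely different route from the paper. You count representations exactly via $N(d,m)=\sum_{c_2\ge 0}g(d-c_2)\,g(m-d-c_2)$, whereas the paper exhibits $k$ explicit ``special'' representations of $(6k+3)n-1$ and then, for the upper bound in (a), uses four coefficient swaps (e.g.\ $3(2n)=2(3n)$, $(2n-2)+(2n)=2(2n-1)$) to force any $t$ with exactly $k$ representations to admit a representation with $a,c\le 2$, $a+b+c\le 3$, $d+e\le 2k-1$, whence $t\le(6k+3)n-1$. Your evaluation $R((6k+3)n-1)=g(6k+1)=k$ for $n>6k+3$ is right (layer $j\ge 1$ would need $j(n-1)\le 6k+2$), and your part (b) is essentially complete: non-representability of $n^2-3n+1$ falls out of the necessary condition $t\ge d_0(n-1)$ with $d_0=n-1$, and representability above that threshold reduces to $N(d_0,m_0)\ge 1$ whenever $d_0\le m_0$ and $m_0\ge 4$, which your padding remark supplies. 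This replaces the paper's induction on the intervals $[k(n-1),kn]$ (its Lemma 3) with a clean congruence criterion, and it buys an exact formula for $R(t)$ rather than just bounds.

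The genuine gap is exactly where you locate it: the inequality $N(d,m)\ge k+1$ for all $1\le d\le m$ with $m\ge 6k+4$ carries the entire weight of the upper bound in part (a), and you give only a plan for it. It is not a soft estimate. Writing $D=\min(d,m-d)$ and using the symmetry $N(d,m)=N(m-d,m)$, the sum collapses to $g(m)$ when $D=0$, to $g(m-2)$ when $D=1$, and to $g(m-4)+g(m-2)$ when $D=2$; the first two equal exactly $k+1$ at $m=6k+4$, so the inequality is tight and the case analysis you defer (small $D$ handled by these closed forms, $D\ge 3$ by noting the sum has at least $D-1$ nonvanishing terms with the $i=0$ term already of size about $\lfloor (m-2D)/6\rfloor$) must actually be written out and checked uniformly in $n$ and $k$. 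My own spot checks suggest it is true, so your approach should succeed, but as it stands part (a) is not proved: the crux inequality is asserted, not established, and it is precisely the step that the paper's swap argument exists to do.
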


Before we prove Theorem~\ref{ref1}, we need some lemmas.

\begin{lemma}
\begin{enumerate}[$($a$)$]
\item $R( (6k+3)n - 1) \geq k$ for $n \geq 4$ and $k \geq 1$.
\item $R( (6k+3)n - 1) = k$ for $n > 6k+3$ and $k \geq 1$.
\end{enumerate}
\label{lem1}
\end{lemma}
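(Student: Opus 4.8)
The plan is to translate the statement ``$(a,b,c,d,e)$ is a representation of $(6k+3)n-1$'' into a pair of scalar conditions and then just count solutions. Expanding
$$a(2n-2)+b(2n-1)+c(2n)+d(3n-3)+e(3n),$$
and collecting the coefficient of $n$ separately from the constant term, I would set $S = 2a+2b+2c+3d+3e$ and $T = 2a+b+3d$. Then a representation of $(6k+3)n-1$ is exactly a tuple of non-negative integers $(a,b,c,d,e)$ with $Sn-T=(6k+3)n-1$, equivalently
$$\bigl(S-(6k+3)\bigr)\,n = T-1 .$$
The single most useful observation is the elementary inequality $0\le T\le S$ (indeed $S-T = b+2c+3e\ge 0$), which is what will pin down $S$.

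For part (a) I would simply exhibit $k$ representations. Taking $a=d=0$ and $b=1$ forces $T=1$ and $S=2+2c+3e$, so the displayed equation holds precisely when $S=6k+3$, i.e.\ $2c+3e=6k+1$. Counting non-negative solutions of $2c+3e=6k+1$ is routine: the parity of $6k+1$ forces $e$ to be odd, and $3e\le 6k+1$ gives $e\in\{1,3,\dots,2k-1\}$, which is exactly $k$ values, each with $c=(6k+1-3e)/2\ge 0$. Since the elements of $X_n$ are distinct for $n\ge 4$, these $k$ coefficient tuples are genuinely distinct representations, so $R((6k+3)n-1)\ge k$.

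For part (b) I would show that when $n>6k+3$ these are the \emph{only} representations. First, $S\ge 6k+3$: if $S\le 6k+2$ then the left side of the displayed equation is at most $-n<-1\le T-1$, which is impossible. Writing $S=6k+3+m$ with $m\ge 0$ gives $T=mn+1$, and combining with $T\le S=6k+3+m$ yields $m(n-1)\le 6k+2$. Since $n-1>6k+2$, this is impossible for $m\ge 1$, so $m=0$; hence $S=6k+3$ and $T=1$. Finally $T=2a+b+3d=1$ forces $a=d=0$ and $b=1$, exactly as in part (a). Thus every representation is one of the $k$ already found, giving $R((6k+3)n-1)=k$.

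The main obstacle---really the only subtle point---is the bookkeeping in the split into $S$ and $T$ and the recognition that $T\le S$ is precisely what lets the hypothesis $n>6k+3$ eliminate all the ``large $S$'' representations. Once that inequality is in hand, both parts reduce to the trivial enumeration of solutions of $2c+3e=6k+1$.
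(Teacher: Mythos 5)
Your proof is correct and takes essentially the same route as the paper: the same $k$ special representations (with $a=d=0$, $b=1$, and $2c+3e=6k+1$), and the same mod-$n$ analysis forcing $2a+b+3d=1$ when $n>6k+3$. Your inequality $T\le S$ is just a slightly cleaner packaging of the paper's estimate $(6k+3)n-1>(n-1)(2a+b+3d)$, which rules out the case $2a+b+3d=mn+1$ with $m\ge 1$ in the same way.
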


\begin{proof}
First, we note that 
\begin{equation}
(6k+3)n - 1 = 1 \cdot (2n-1) + (3t-1) \cdot (2n) + (2(k-t)+1)\cdot (3n)
\label{eq1}
\end{equation}
for any integer $t$ with $1 \leq t \leq k$.  This provides at least $k$ distinct
representations for $(6k+3)n - 1$ and proves (a).  We call these
$k$ representations {\sl special}.

To prove (b), we need to see that the $k$ special representations given by
(\ref{eq1}) are, in fact, all representations that can occur.  

Suppose that $(a,b,c,d,e)$ is a $5$-tuple of non-negative integers such that
\begin{equation}
a(2n-2) + b(2n-1) + c(2n) + d(3n-3) + e(3n)  = (6k+3) n - 1 .
\label{eq2}
\end{equation}
Reducing  this equation modulo $n$, we get
$ -2a -b -3d \equiv -1 $ (mod $n$).  Hence there exists an integer
$m$ such that $2a + b + 3d  = mn + 1$.  Clearly $m$ is non-negative.  
There are two cases to consider:  $m = 0$ and $m \geq 1$.

If $m = 0$, then $2a + b + 3d = 1$, which, by the non-negativity of the
coefficients $a, b, d$ implies that $a=d=0$ and $b = 1$.  Thus by
(\ref{eq2}) we get $ 2n-1 + 2cn + 3en  = (6k+3) n - 1$, or
\begin{equation}
2c + 3e = 6k+1.
\label{eq3}
\end{equation}
Taking both sides modulo $2$, we see that 
$e \equiv 1$ (mod $2$), while taking both sides modulo $3$, we see that
$c \equiv 2$ (mod $3$).  Thus we can write $e = 2r+1$, $c = 3s-1$, and
substitute in (\ref{eq3}) to get $k = r + s $.   Since $s \geq 1$, it follows that
$0 \leq r \leq k-1$, and this gives our set of $k$ special representations
in (\ref{eq1}).

If $m \geq 1$, then $n +1 \leq mn+1 = 2a+b + 3d$, so
$n \leq 2a+b+3d-1$.  However, we know that
$(6k+3)n - 1 \geq a(2n-2) + b(2n-1) + d(3n-3) > (n-1) (2a+b+3d)$.  
Hence $(6k+3)n > (n-1)(2a+b+3d) + 1 > (n-1)(2a+b+3d-1) \geq (n-1)n$.
Thus $6k+3 > n-1$.    It follows that if $n > 6k+3$, then this case
cannot occur, so all the representations of $(6k+3)n - 1$ are 
accounted for by the $k$ special representations given in (\ref{eq1}).
\end{proof}

We are now ready to prove Theorem~\ref{ref1} (a).

\begin{proof}
We already know from Lemma~\ref{lem1} that for $n > 6k+3$, the
number $N := (6k+3)n - 1$ has exactly $k$ representations.  It now suffices
to show that if $t$ has exactly $k$ representations, for $k \geq 1$, then 
$t \leq N$.

We do this by assuming $t$ has at least one representation, say
$t = a(2n-2) + b(2n-1) + c(2n) + d(3n-3) + e(3n)$, for
some $5$-tuple of non-negative integers $(a,b,c,d,e)$.
Assuming these integers are large enough (it suffices to assume 
$a,b,c,d,e \geq 3$),
we may take advantage of the internal 
symmetries of $X_n$ to obtain additional representations with
the following swaps.

\begin{enumerate}[$($a$)$]

\item $3(2n) = 2(3n)$;
hence 
$$a(2n-2) + b(2n-1) + c(2n) + d(3n-3) + e(3n) $$
$$ = a(2n-2) + b(2n-1) + (c+3)(2n) + d(3n-3) + (e-2)(3n).$$ 

\item $3(2n-2) = 2(3n-3)$; hence $$ a(2n-2) + b(2n-1) + c(2n) + d(3n-3) + e(3n)$$ $$= (a+3)(2n-2) + b(2n-1) + c(2n) + (d-2)(3n-3) + e(3n).$$

\item $2n-2 + 2n = 2(2n-1)$; hence $$ a(2n-2) + b(2n-1) + c(2n) + d(3n-3) + e(3n)$$ $$= (a+1)(2n-2) + (b-2)(2n-1) + (c+1)(2n) + d(3n-3) + e(3n).$$

\item $2n-2 + 2n-1 + 2n = 3n-3 + 3n$; hence $$ a(2n-2) + b(2n-1) + c(2n) + d(3n-3) + e(3n)$$ $$= (a+1)(2n-2) + (b+1)(2n-1) + (c+1)(2n) + (d-1)(3n-3) + (e-1)(3n).$$

\end{enumerate}

We now do two things for each possible swap:  first, we show that the requirement
 that $t$ have exactly $k$ representations imposes upper bounds on the 
size of the coefficients.
Second, we swap until we have a representation which can be conveniently bounded in terms of $k$.

\begin{enumerate}[$($a$)$]

\item  If $\lfloor {e \over 2} \rfloor +
\lfloor {c \over 3} \rfloor \geq k$, we can find at least $k+1$
representations of $t$.  Thus we can find a representation of $t$ with
$c \leq 2$ and $e \leq 2k-1$.

\item   Similarly, if $\lfloor {d \over 2} \rfloor +
\lfloor {a \over 3} \rfloor  \geq k$, we can find at least $k+1$
representations of $t$.  Thus we can find a representation of $t$
with $d \leq 2k-1$ and $a \leq 2$. Combining this with (a), we can
 find a representation with $a,c \leq 2$ and $d + e \leq 2k -1$.

\item If $\lfloor {b \over 2} \rfloor + \min\{ a,c \} \geq k$, 
we can find at least $k+1$ representations of $t$.   
 Thus we can find a representation of $t$ with $| b - \min\{ a,c \} | \leq 1$.  
If we start with the assumption $a, c \leq 2$, this ensures that
$\min\{a,b,c\} \leq \lfloor {{a+b+c}\over 3} \rfloor \leq
\min\{ a,b,c \} + 1$ and $\max\{a,b,c\} - \min\{a,b,c\} \leq 3$.  

\item If $\min\{ a,b,c \} + \min\{ d,e \} \geq k$ we can find at least $k+1$
representations of $t$.  When this swap is followed by (a) or (b) (if necessary)
we can find a representation with $d + e \leq 2k -1$, $a+b+c \leq 3$ and $a,c \leq 2$.

\end{enumerate}

Putting this all together, we see that $t \leq (2n-1) + 2 (2n) +
(2k-1)(3n) = (6k+3)n - 1$, as desired.
\end{proof}

In order to prove Theorem~\ref{ref1} (b), we need a lemma.

\begin{lemma}
The integers $k(n-1), k(n-1)+1, \ldots, kn$ are representable for
$k = 2$ and $k \geq 4$ and for $n \geq 4$.  
\label{lem2}
\end{lemma}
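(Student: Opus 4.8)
The plan is to reduce the whole statement to a single ``engine'': the three generators $2n-2,\,2n-1,\,2n$ clustered near $2n$ together fill a contiguous block of integers. Concretely, I would first isolate the sub-claim that for every $p\ge 1$, the integers representable using \emph{exactly} $p$ of the generators from $\{2n-2,\,2n-1,\,2n\}$ are precisely the $2p+1$ consecutive integers forming the interval $[2p(n-1),\,2pn]$. This is easy to verify: if $\alpha,\beta,\gamma\ge 0$ count the copies of $2n-2,\,2n-1,\,2n$ with $\alpha+\beta+\gamma=p$, then the value is $2pn-(2\alpha+\beta)$, and as $\alpha,\beta$ vary subject to $\alpha+\beta\le p$ the quantity $2\alpha+\beta$ runs through every integer in $[0,2p]$ (take $\alpha=\lfloor s/2\rfloor,\ \beta=s-2\alpha$ for the target $s$). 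Hence the represented values are exactly $2pn-s$ for $0\le s\le 2p$, i.e.\ all of $[2p(n-1),\,2pn]$.

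With this in hand the even case is immediate. If $k=2p$ with $p\ge 1$ — which covers $k=2$ together with every even $k\ge 4$ — then $[k(n-1),\,kn]=[2p(n-1),\,2pn]$ is exactly the block produced by $p$ copies of the $2n$-cluster, so every integer in the target range is representable without even touching $3n-3$ or $3n$.

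For odd $k\ge 5$ I would write $k=2p+3$ with $p=(k-3)/2\ge 1$ and use exactly one generator from the ``large'' cluster $\{3n-3,\,3n\}$ to fix parity, supplying the remaining weight with $p$ copies of the $2n$-cluster. Given a target $M=kn-j$ with $0\le j\le k$, I split on the size of $j$. If $j\le 2p$, take the large generator to be $3n$; the residual $M-3n=2pn-j$ lies in $[2p(n-1),\,2pn]$ and is represented by the sub-claim. If $2p<j\le k$, take the large generator to be $3n-3$; the residual $M-(3n-3)=2pn-(j-3)$ has $j-3\in\{2p-2,\,2p-1,\,2p\}\subseteq[0,2p]$, since $p\ge 1$ forces $2p-2\ge 0$, so it again falls in the engine's interval. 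Thus $M$ is representable in every case.

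The only genuine content is the sub-claim about the $2n$-cluster; everything afterward is bookkeeping, and the main thing to watch is that the residuals never dip below $0$, which is exactly what the hypothesis $p\ge 1$ secures. I would close by remarking that this also explains the exclusions $k=1,3$: the equation $2p+3q=k$ has no solution with $p\ge 1,\ q\ge 0$ in those two cases, mirroring the fact that, for instance, $3n-2$ has no representation for $n\ge 4$ (a single generator cannot reach it and two generators already exceed $4n-4>3n-2$). So the hypothesis ``$k=2$ or $k\ge 4$'' is sharp rather than an artifact of the argument.
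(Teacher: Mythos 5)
Your proof is correct, and it takes a genuinely different route from the paper's. The paper argues by induction on $k$: it writes out the base cases $k=2,4$ explicitly, and in the inductive step it splits on $k \bmod 3$, passing from $k-2$ to $k$ by adding $2n$ or $2n-2$ when $k\equiv 0 \pmod 3$, and from $k-3$ to $k$ by adding $3n$ or $3n-3$ otherwise, with the choice of summand governed by whether $t$ exceeds $k-2$ (resp.\ $k-3$). You instead give a direct, non-inductive construction: you first characterize exactly which integers are sums of $p$ generators from the cluster $\{2n-2,2n-1,2n\}$ --- the full interval $[2p(n-1),2pn]$, since $2\alpha+\beta$ sweeps $[0,2p]$ --- which disposes of every even $k$ at once, and you then handle odd $k=2p+3\ge 5$ by peeling off a single copy of $3n$ or $3n-3$ so that the residual $2pn-j$ or $2pn-(j-3)$ lands back in that interval; the only point needing care, that $j-3\ge 2p-2\ge 0$, is exactly where $p\ge 1$ enters, and together the two cases cover $k=2$ and all $k\ge 4$. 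Your version buys explicit representations, replaces the paper's somewhat opaque mod-$3$ case split with a transparent parity split, needs no lower bound on $n$ for the representability claims themselves, and explains why $k=1,3$ are genuinely excluded (e.g.\ $3n-2$ is unreachable for $n\ge 4$); the paper's induction is more compact but conceals the structure of the representations. Both arguments are complete and elementary.
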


\begin{proof}
We prove the result by induction on $k$.    The base cases 
are $k = 2,4$, and we have the representations given below:

\begin{eqnarray*}
4n-4 &=& 2 (2n-2) \\
4n-3 &=& (2n-2) + (2n-1) \\
4n-2 &=& 2(2n-1) \\
4n-1 &=& (2n-1) + (2n) \\
4n &=& 2 (2n).
\end{eqnarray*}

Now suppose $ln-m$ is representable for $4 \leq l < k$ and
$0 \leq m \leq l$.   We want to show that $kn-t$ is representable for
$0 \leq t \leq k$.    There are three cases, depending on $k$ (mod $3$).

If $k \equiv 0$ (mod 3), and $k \geq 4$, then $(k-2)n - t = kn - t - 2n$ is
representable if $t \leq k-2$; otherwise $(k-2)n - t + 2 = kn - t - (2n-2)$
is representable.   By adding $2n$ or $2n+2$, respectively, we get a 
representation for $kn-t$.

If $k \equiv 1$ (mod 3), and $k \geq 4$, or if 
$k \equiv 2$ (mod 3), then $(k-3)n - t = kn -t - 3n$ is
representable if $t \leq k-3$; otherwise $(k-3)n -t + 3 = kn-t - (3n-3)$
is representable.  By adding $3n$ or $3n+3$, respectively, we get a
representation for $kn-t$.  
\end{proof}

Now we prove Theorem~\ref{ref1} (b).

\begin{proof}
First, let's show that every
integer $> n^2 - 3n+1$ is representable.  Since if $t$ has a representation,
so does $t+2n-2$, it suffices to show that the $2n-2$ numbers
$n^2 -3n+2, n^2-3n+3, \ldots, n^2-n-1$ are representable.  

We use Lemma~\ref{lem2} with $k = n-2$ to see that the numbers
$(n-2)(n-1) = n^2-3n+2, \ldots, (n-2)n = n^2-2n$ are all representable.  
Now use Lemma~\ref{lem2} again with $k = n-1$ to see that the numbers
$(n-1)(n-1) = n^2-2n+1, \ldots, (n-1)n = n^2-n$  are all representable.
We therefore conclude that every integer $> n^2-3n+1$ has a representation.

Finally, we show that $n^2-3n+1$ does not have a representation.
Suppose, to get a contradiction, that it does:
$$ n^2 -3n+1 = a(2n-2) + b(2n-1) + c(2n) + d(3n-3) + e(3n).$$
Reducing modulo $n$ gives
$1 \equiv -2a -b -3d $ (mod $n$), so there exists an integer $m$ such
that $2a+b+3d = mn-1$.  Since $a, b, d$ are non-negative, we must have
$m \geq 1$.  

Now $n^2-3n+1 \geq a(2n-2)+ b(2n-1) + d(3n-3) > (n-1)(2a+b+3d)$.  
Thus 
\begin{equation}
n^2 -3n+1 \geq (n-1)(mn-1) = mn^2 -(m+1)n + 1.
\label{eq5}
\end{equation}
If $m = 1$,
we get $n^2 - 3n+1 \geq n^2 - 2n + 1$, a contradiction.  Hence $m \geq 2$.
 From (\ref{eq5}) we get $(m-1)n^2 - (m-2)n \leq 0$.  Since $n \geq 1$, we get
$(m-1)n - (m-2) \leq 0$, a contradiction.
\end{proof}

\section{Additional remarks}

One might object to our examples because the numbers are not pairwise
relatively prime.  But there also exist reasonable
$5$-tuples with $g_0 > g_1$ for which
all pairs are relatively prime:  for example,
$g_0(9,10,11,13,17) = 25$, but $g_1(9,10,11,13,17) = 24$.
More generally one can use the techniques in this paper to show that
$g_0(10n-1, 15n-1, 20n-1, 25n, 30n-1) = 50n^2 -1$ and
$g_1(10n-1, 15n-1, 20n-1, 25n, 30n-1) = 50n^2 - 5n$ for $n \geq 1$, so that
$g_0 - g_1 \rightarrow \infty$ as $n \rightarrow \infty$.

For $k \geq 2$, let $f(k)$ be the least non-negative integer $i$
such that
there exists a reasonable $k$-tuple $X$ with $g_i(X) > g_{i+1}(X)$.  
A priori $f(k)$ may not exist.  For example, if $k = 2$, then we have
$g_i (x_1, x_2) = (i+1)x_1x_2 - x_1 - x_2$, so 
$g_i (x_1, x_2) < g_{i+1} (x_1, x_2)$ for all $i$.
Thus $f(2)$ does not exist.
In this paper, we have shown that $f(5) = 0$.   

This raises the obvious question of other values of $f$.

\begin{theorem}
We have $f(i) = 0$ for $i \geq 4$.
\end{theorem}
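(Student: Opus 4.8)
The plan is to collapse the entire range $i \ge 4$ onto one small base case by means of a dimension-raising ``doubling'' lemma, and then to supply that base case explicitly. Recall that $f(i) = 0$ means exactly that some reasonable $i$-tuple $X$ has $g_0(X) > g_1(X)$ (both defined). The main theorem already produces such a $5$-tuple, namely $X_n$ for large $n$, so $f(5) = 0$ is in hand; what remains is a witness for $i = 4$ and for every $i \ge 6$, and the lemma will manufacture the latter uniformly.

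I would first prove the following. Suppose $X = (x_1, \ldots, x_k)$ is reasonable with $g_0(X) > g_1(X)$; write $N_0 = g_0(X)$, $N_1 = g_1(X)$, and let $b$ be any odd integer with $b > 2N_0$ and $b > 2\max_i x_i$. Then $X' = (2x_1, \ldots, 2x_k, b)$ is a reasonable $(k+1)$-tuple with $g_0(X') = 2N_0 + b$ and $g_1(X') = 2N_1 + b$, so $g_0(X') > g_1(X')$. The engine is a clean count: since $b$ is odd and every $2x_i$ is even, a representation of $t$ by $X'$ uses a coefficient $e$ of $b$ with $e \equiv t \pmod 2$, whence $R(t) = \sum_{e \equiv t \,(2),\, eb \le t} R_X\bigl((t-eb)/2\bigr)$, where $R_X$ counts representations by $X$. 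Because $b > 2N_0$, only $e \in \{0,1\}$ can contribute when $t \le 2N_0 + b$, giving $R(t) = R_X(t/2)$ for even $t$ and $R(t) = R_X((t-b)/2)$ for odd $t$; moreover for every $t > 2N_1 + b$ the argument fed to $R_X$ exceeds $N_1$ (in fact exceeds $N_0$), so it is representable but not uniquely, forcing at least two representations. This pins down $g_0(X')$ and $g_1(X')$ and shows the discrepancy is preserved. Reasonableness of $X'$ comes for free: $\gcd(X') = \gcd(2,b) = 1$; $b$ is not a combination of the even generators; and $2x_i$ is not a combination of the others, since using one copy of $b$ is odd, using two exceeds $2x_i$ as $2b > 4\max_j x_j$, and using none reduces to reasonableness of $X$.

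For the base case I would exhibit a single explicit reasonable $4$-tuple $X_0$ with $g_0(X_0) > g_1(X_0)$, verifying the two values either by direct finite computation or by the modular reduction used for $X_n$ (reduce the representation equation modulo the least generator and bound the resulting multiplier). Granting this, iterating the doubling lemma from $X_0$ yields reasonable tuples of every size $k \ge 4$ with $g_0 > g_1$ (iterating from $X_n$ independently covers $k \ge 5$ as a cross-check), and therefore $f(i) = 0$ for all $i \ge 4$.

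The main obstacle is precisely the base case. Experiment indicates that for ``generic'' small $4$-tuples the inequality goes the wrong way, $g_0 < g_1$: small, nearly consecutive generators keep the Frobenius number low while many intermediate values still enjoy a unique representation. A deliberately engineered tuple (or a parameterized $4$-tuple family suited to the modular argument) is therefore needed, and confirming \emph{both} $g_0$ and $g_1$ for it is the delicate point. By contrast, the inductive doubling step is routine once the parity bookkeeping and the size condition $b > 2N_0$ are in place.
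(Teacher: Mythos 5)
Your doubling lemma is correct, and I checked it in detail: the parity decomposition $R_{X'}(t)=\sum_{e\equiv t\ (2),\ eb\le t}R_X\bigl((t-eb)/2\bigr)$ is an honest bijective count; the condition $b>2N_0$ forces $e\in\{0,1\}$ in the relevant range; for $t>2N_1+b$ the argument fed to $R_X$ always exceeds $N_1$ (and exceeds $N_0$ whenever the parity actually permits a representation), so $R_{X'}(t)\ne 1$ there; and reasonableness of $X'$ follows from the parity and size conditions as you say. This is a genuinely different route for $i\ge 6$ from the paper's, which instead exhibits, for each $n\ge 6$, an explicit $n$-element tuple $(n+1,n+4,n+5,[n+7..2n+1],2n+3,2n+4)$ and verifies $g_0=2n+7>2n+6=g_1$ by a short case analysis on the interval $[2n+8..3n+8]$. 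Your induction buys uniformity (one lemma replacing a seven-case table) at the cost of exponentially growing generators; the paper keeps the generators linear in the arity.

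The genuine gap is the one you flag yourself: the case $i=4$. Your lemma only raises arity, so the whole argument ultimately rests on producing a reasonable $4$-tuple with $g_0>g_1$, and you do not produce one --- you only describe how you would go about searching for it and note that generic small quadruples go the wrong way. As it stands you have proved the theorem only for $i\ge 5$. The paper's witness is $(10,15,32,48)$, a reasonable quadruple with $g_0=101$ and $g_1=99$, verified by finite computation (the paper notes it is the witness minimizing the largest element). Supplying this single example closes the gap; indeed, combined with your doubling lemma it alone would yield all $i\ge 4$ without needing the $5$-tuple family at all.
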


\begin{proof}
As mentioned in the Introduction, the example $(8,9,11,14,15)$ shows that
$f(5) = 0$.

For $i = 4$, we have the example $g_0(10,15,32,48) = 101$ and
$g_1(10,15,32,48) = 99$, so $f(4) = 0$.  (This is the reasonable quadruple 
with $g_0 > g_1$ that minimizes the largest element.)

We now provide a class of examples for $i \geq 6$.  For $n \geq 6$
define $X_n$ as follows:
$$X_n = (n+1, n+4, n+5, [n+7..2n+1], 2n+3, 2n+4),$$
where by $[a..b]$ we mean the list $a, a+1, a+2, \ldots, b$.

For example, $X_8 = (9, 12, 13, 15, 16, 17, 19, 20)$.  Note that $X_n$ is
of cardinality $n$.  We make the following three claims for $n \geq 6$.

\begin{enumerate}[$($a$)$]
\item $X_n$ is reasonable.
\item $g_0 (X_n) =  2n+7$.
\item $g_1 (X_n) =  2n+6$.
\end{enumerate}

(a):  To see that $X_n$ is reasonable, assume that some element $x$ is in the
${\mathbb N}$-span of the other elements.  Then either $x = ky$ for some
$k \geq 2$, where $y$ is the smallest element of $X_n$, or $x \geq y+z$, where
$y, z$ are the two smallest elements of $X_n$.   It is easy to see both
of these lead to contradictions.

(b) and (c): Clearly $2n+7$ is not representable, and  $2n+6$ has the
single representation $(n+1) + (n+5)$.  It now suffices to show that every
integer $\geq 2n+8$ has at least two representations.  And to show this,
it suffices to show that all integers in the range $[2n+8..3n+8]$ have
at least two representations.  

Choosing $(n+4) + [n+7..2n+1]$ and $(n+5)+[n+7..2n+1]$ gives two distinct
representations for all numbers in the interval $[2n+12..3n+5]$.  So it
suffices to handle the remaining cases $2n+8, 2n+9, 2n+10, 2n+11,
3n+6, 3n+7, 3n+8$.  This is done as follows:

\begin{alignat*}{2}
2n+8 &= (n+1)+(n+7) & \ = & \ 2(n+4) \\
2n+9 &= (n+4)+(n+5) & \ =  & \
	\begin{cases}
		3(n+1), &  \text{if $n = 6$}; \\
		(n+1)+(n+8), & \text{if $n \geq 7$.}
	\end{cases} \\
2n+10 &= 2(n+5) & \ = & \ 
	\begin{cases}
		(n+1)+(2n+3), & \text{if $n = 6$}; \\
		3(n+1), & \text{if $n = 7$}; \\
		(n+1)+(n+9), & \text{if $n \geq 8$.}
	\end{cases} \\
2n+11 &= (n+4)+(n+7) & \ = & \
	\begin{cases}
		(n+1)+(2n+4), & \text{if $n = 6$}; \\
		(n+1)+(2n+3), & \text{if $n = 7$}; \\
		3(n+1) , & \text{if $n = 8$}; \\
		(n+1)+(n+10), & \text{if $n \geq 9$.}
	\end{cases} \\
3n+6 &=  2(n+1) + (n+4) & \ = & \ (n+5) + (2n+1) \\
3n+7 &=  2(n+1) + (n+5) & \ = & \ (n+4) + (2n+3) \\
3n+8 &=  (n+5) + (2n+3) & \ = & \ (n+4) + (2n+4).
\end{alignat*}
			
\end{proof}

We do not know the value of $f(3)$.
The example 
\begin{align*}
g_{14}(8,9,15) &= 172 \\
g_{15}(8,9,15) &= 169 
\end{align*}
shows that $f(3) \leq 14$.

\begin{conjecture} $f(3) = 14$.
\end{conjecture}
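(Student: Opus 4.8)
We already have the upper bound $f(3)\le 14$ from the triple $(8,9,15)$, so the entire content of the conjecture is the lower bound $f(3)\ge 14$: one must show that for \emph{every} reasonable triple $X=(x_1,x_2,x_3)$ and every index $i$ with $0\le i\le 13$, one has $g_i(X)\le g_{i+1}(X)$ (whenever both are defined). The plan is to recast this in terms of the representation-counting function. Writing $R(N)$ for the number of representations of $N$ by $X$ and normalizing $x_1<x_2<x_3$, the quantity $g_i(X)$ is the largest $N$ with $R(N)=i$, and $g_i\le g_{i+1}$ is equivalent to the assertion that the value $i+1$ is attained by $R$ at some integer larger than $g_i$ (note $R(g_i)=i\neq i+1$). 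So the target becomes: \emph{above the last occurrence of the level $i$, the level $i+1$ is still reached.}

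The first tool is the elementary monotonicity $R(N+x_j)\ge R(N)$ for each generator $x_j$, obtained by incrementing the corresponding coordinate of every representation; in particular $R$ never decreases along an arithmetic progression of common difference $x_j$. Together with $R(N)\to\infty$, this shows that above $g_i$ the function $R$ is eventually $\ge i+1$, so the only danger is that every such progression \emph{skips} the value $i+1$, jumping from $\le i$ to $\ge i+2$. To control skips I would use the layered decomposition
\[
R(N)=\sum_{c\ge 0} r_2(N-c\,x_3),
\]
where $r_2(M)$ is the two-generator denumerant of $(x_1,x_2)$, for which the classical closed form (Popoviciu's formula) is available, as is the exact two-generator identity $g_i(x_1,x_2)=(i+1)x_1x_2-x_1-x_2$ recorded in the Introduction. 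This reduces the behaviour of $R$ near the top of each level set to counting lattice points of a fixed residue class in a growing triangle, where the increments as $N$ advances can be tracked modulo $x_1$, $x_2$, and $x_3$.

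The universal quantifier over infinitely many triples I would handle by a dichotomy. When the generators are large relative to $i\le 13$, the triangle grows slowly and the residue-class count advances by controlled amounts, so no level $\le 14$ can be the last-but-skipped one; this should yield an explicit bound $B$ such that any reasonable triple exhibiting a descent at index $\le 13$ satisfies $\max_j x_j\le B$. The remaining finitely many triples are then checked directly, with the analysis arranged so that the smallest index at which any of them descends is exactly $14$, realized by $(8,9,15)$.

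The hard part will be the large-generator regime. There is no a priori finite reduction, so everything hinges on proving a clean uniform statement of the form ``if each $x_j$ exceeds a suitable function of $i$, then $g_i\le g_{i+1}$,'' with a threshold sharp enough that the leftover finite check actually closes at $14$ rather than at some smaller index. The interaction of the three residues mod $x_1,x_2,x_3$ (and of the pairwise $\gcd$s, which need not be $1$) is precisely where the skip phenomenon lives, and bounding it tightly enough to distinguish index $13$ from index $14$ is the crux: a bound merely polynomial in $i$ would leave an impractically large finite search, so the real work is sharpening it to a small explicit constant.
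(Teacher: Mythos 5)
The statement you are trying to prove is stated in the paper only as a \emph{conjecture}: the authors establish the upper bound $f(3)\le 14$ via the triple $(8,9,15)$ and report a finite computation (all triples with largest element $\le 200$) as evidence, but they offer no proof of the lower bound. So there is no argument in the paper to compare yours against; the question is whether your proposal actually closes the problem, and it does not.

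Your framing of the content is correct: the equivalence of $g_i\le g_{i+1}$ with ``level $i+1$ is attained above $g_i$,'' the monotonicity $R(N+x_j)\ge R(N)$, and the layered decomposition $R(N)=\sum_{c\ge 0}r_2(N-c\,x_3)$ are all sound tools. But the entire weight of the lower bound $f(3)\ge 14$ rests on the ``clean uniform statement'' that sufficiently large generators preclude a descent at any index $\le 13$, together with a threshold small enough to make the residual finite check feasible and to have it close at exactly $14$. You do not prove this statement; you describe it as what one \emph{would} need and acknowledge it as the crux. Nothing in the proposal rules out the skip phenomenon (a progression $N, N+x_j, N+2x_j,\ldots$ jumping from level $\le i$ to level $\ge i+2$) for triples with large, possibly non-coprime-in-pairs generators, and it is not evident that largeness of the $x_j$ relative to $i$ controls this at all --- the two-generator formula you cite shows no descents ever occur with two generators, but gives no quantitative handle on how the third generator's residue class interacts with the Popoviciu count. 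As written, the proposal is a research program whose decisive step is exactly the open part of the conjecture, so it does not constitute a proof.
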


We have checked all triples with largest element $\leq 200$, but have
not found any counterexamples.

\section{Acknowledgments}

We thank the referee for useful comments.
Thanks also go to Dino Lorenzini who sent us a list of comments after
we submitted this paper. Among them was an encouragement to make more
use of the formula of Brown et al, which led to the example that shows that
$f(4) = 0$.

\end{document}